\documentclass{amsart}
\usepackage{amsfonts}
\usepackage{amsmath,amssymb}
\usepackage{amsthm}
\usepackage{amscd}
\usepackage{graphics}
\usepackage{graphicx}

\usepackage[pagewise]{lineno}
\theoremstyle{remark}{
\newtheorem{Def}{{\rm Definition}}

}
\theoremstyle{plain}{

\newtheorem{Prop}{Proposition}
\newtheorem{Thm}{Theorem}
\newtheorem*{MainThm}{Main Theorem}

}

\begin{document}
\title[New families of fold maps on $7$-dimensional simply-connected manifolds]{$7$-dimensional closed simply-connected and spin manifolds having 2nd integral cohomology classes whose squares are not divisible by $2$ and stable fold maps on them}
\author{Naoki Kitazawa}
\keywords{Singularities of differentiable maps; (stable) fold maps. Cohomology classes. Higher dimensional closed and simply-connected manifolds.\\
\indent {\it \textup{2020} Mathematics Subject Classification}: Primary~57R45. Secondary~57R19.}
\address{Institute of Mathematics for Industry, Kyushu University, 744 Motooka, Nishi-ku Fukuoka 819-0395, Japan\\
 TEL (Office): +81-92-802-4402 \\
 FAX (Office): +81-92-802-4405 \\
}
\email{n-kitazawa@imi.kyushu-u.ac.jp}
\urladdr{https://naokikitazawa.github.io/NaokiKitazawa.html}
\maketitle
\begin{abstract}
This article presents families of $7$-dimensional closed and simply-connected manifolds
and {\it fold} maps on them such that squares of 2nd integral cohomology classes may not be divisible by $2$.

{\it Fold} maps are higher dimensional versions of Morse functions. The author has launched and been challenging the following new area: geometric and constructive studies of higher dimensional, closed and simply-connected manifolds. They are central objects in classical algebraic topology and differential topology. They were classified via algebraic and abstract objects in the last century and their understanding has been studied via concrete algebraic topological theory such as concrete bordism theory since the 2000s by Crowley, Kreck and Wang for example.  
 
Fold maps are fundamental objects in the new area and the author has obtained families of these manifolds and fold maps on the manifolds. The present paper presents a related new explicit result.


\end{abstract}


\maketitle
\section{Introduction, terminologies and notation.}
\label{sec:1}
\subsection{Smooth maps and fold maps.}
{\it Fold} maps are higher dimensional versions of Morse functions and fundamental tools in the present paper and in an area which we can regard as a higher dimensional version of the theory of Morse functions and its applications to algebraic topology and differential topology.

A {\it singular} point of a differentiable map is a point in the domain at which the dimension of the image of the differential is smaller than both the dimensions of the domain and the target. The set of all the singular points the {\it singular set} of the map. The image of the singular set is the {\it singular value set} of the map. The {\it regular value set} of the map is the complementary set of the singular value set of the map. A {\it singular {\rm (}regular{\rm )} value} means a point in the singular (resp. regular) value set.

Throughout the present paper, manifolds, maps between them, (boundary) connected sums of manifolds, and other fundamental notions are considered in the smooth category (in the class $C^{\infty}$) unless otherwise stated. 
For a smooth map $c$, $S(c)$ denotes the singular set of $c$.

\begin{Def}
\label{def:1}
Let $m \geq n \geq 1$ be integers.
A smooth map from an $m$-dimensional smooth manifold with no boundary into an $n$-dimensional smooth manifold with no boundary is said to be a {\it fold} map if at each singular point $p$, it has the form
$$(x_1, \cdots, x_m) \mapsto (x_1,\cdots,x_{n-1},\sum_{k=n}^{m}{x_k}^2)$$
for suitable coordinates and a suitable integer satisfying $0 \leq i(p) \leq \frac{m-n+1}{2}$.
\end{Def}

\begin{Prop}
\label{prop:1}
For a fold map $f$ in Definition \ref{def:1}, $S(f)$ is an {\rm (}$n-1${\rm )}-dimensional closed and smooth submanifold with no boundary, $f {\mid}_{S(f)}$ is an immersion and $i(p)$ is unique for any $p \in S(f)$.
\end{Prop}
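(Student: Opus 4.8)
The plan is to read off all three assertions from the local normal form of Definition~\ref{def:1}, handling the two local statements first and the invariance statement last. Fix a singular point $p$ and work in the source coordinates $(x_1,\dots,x_m)$ and target coordinates $(y_1,\dots,y_n)$ supplied by Definition~\ref{def:1}, so that the last component of $f$ is a nondegenerate quadratic form $Q$ in $x_n,\dots,x_m$ of index $i(p)$, while the first $n-1$ components are $x_1,\dots,x_{n-1}$. The Jacobian matrix of $f$ is then $[\,I_{n-1}\ \ 0\,]$ in its first $n-1$ rows and $(0,\dots,0,\partial Q/\partial x_n,\dots,\partial Q/\partial x_m)$ in its last row. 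Since $Q$ is a nondegenerate quadratic, its partial derivatives vanish simultaneously exactly at $x_n=\dots=x_m=0$; hence $df$ has rank $n-1$ precisely on the coordinate plane $\{x_n=\dots=x_m=0\}$ and rank $n$ off it.

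First I would deduce the submanifold statement. The rank computation shows that in these coordinates $S(f)$ is the coordinate plane $\{x_n=\dots=x_m=0\}$, hence locally an embedded $(n-1)$-dimensional submanifold with no boundary; because these local descriptions are forced by the intrinsic condition $\operatorname{rank} df\le n-1$, they patch to a global $(n-1)$-dimensional smooth submanifold with no boundary. As $\operatorname{rank} df$ is lower semicontinuous, $S(f)=\{\operatorname{rank} df\le n-1\}$ is a closed subset (and compact whenever the source is). For the immersion statement, I would restrict $f$ to $S(f)=\{x_n=\dots=x_m=0\}$, where it becomes $(x_1,\dots,x_{n-1})\mapsto(x_1,\dots,x_{n-1},Q(0))=(x_1,\dots,x_{n-1},0)$; its differential has rank $n-1=\dim S(f)$, so $f|_{S(f)}$ is an immersion.

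The real content is the uniqueness of $i(p)$, which I expect to be the main obstacle, since it requires identifying $i(p)$ with a coordinate-free invariant rather than an artifact of the chosen normal form. My plan is to introduce the intrinsic Hessian of $f$ at $p$: writing $K=\ker df_p$ (of dimension $m-n+1$ by the rank count) and $C=T_{f(p)}N/\operatorname{Im} df_p$ (of dimension $1$), the second-order Taylor term of $f$ at $p$, projected to $C$ and restricted to $K$, defines a symmetric bilinear form $H_p\colon K\times K\to C$. I would check that $H_p$ is well defined: under a change of source coordinates the affected linear terms lie in $\operatorname{Im} df_p$ and so die under the projection to $C$, whence $H_p$ changes only by precomposition with the induced linear isomorphism of $K$, that is, by a congruence; under a change of target coordinates the extra second-order contribution involves $df_p$ applied to kernel vectors and hence vanishes on $K$, so $H_p$ changes only by the induced isomorphism of the line $C$.

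By Sylvester's law of inertia a congruence of a nondegenerate quadratic form preserves its index, while the only freedom in the one-dimensional target $C$ is multiplication by a nonzero scalar, whose sign can interchange the index and the coindex of $H_p$. Consequently the unordered pair $\{\text{index},\text{coindex}\}$ of $H_p$ is an invariant of $p$, and so is the minimum of the two, which lies in $[0,(m-n+1)/2]$. Finally, computing $H_p$ in the coordinates of Definition~\ref{def:1} yields exactly the form $Q$ on $K=\operatorname{span}(\partial_{x_n},\dots,\partial_{x_m})$, whose index is $i(p)$; comparing with the intrinsic invariant shows that $i(p)=\min\{\text{index},\text{coindex}\}$ is forced and hence independent of all choices, giving the desired uniqueness.
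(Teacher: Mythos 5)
Your proposal is correct. Note that the paper itself supplies no proof of Proposition~\ref{prop:1}: it is stated as a standard fact of singularity theory, with the background implicitly deferred to references such as \cite{golubitskyguillemin}. Your argument is the standard one that fills this in: the rank computation in the normal form identifies $S(f)$ locally with the plane $\{x_n=\dots=x_m=0\}$, giving the $(n-1)$-dimensional submanifold and immersion statements, and the invariance of $i(p)$ is obtained by identifying the quadratic part with the intrinsic Hessian $H_p\colon \ker df_p\times\ker df_p\to T_{f(p)}N/\operatorname{Im}df_p$, which is well defined up to congruence on the kernel and a nonzero scalar on the one-dimensional cokernel, so that Sylvester's law makes the unordered pair $\{\text{index},\text{coindex}\}$ invariant and the normalization $0\le i(p)\le\frac{m-n+1}{2}$ pins $i(p)$ down as the minimum of the two. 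One cosmetic point: Definition~\ref{def:1} as printed writes only $\sum_{k=n}^{m}x_k^2$, so your (correct) reading of the last coordinate as a nondegenerate quadratic form of index $i(p)$, i.e.\ $\sum_{k=n}^{m-i(p)}x_k^2-\sum_{k=m-i(p)+1}^{m}x_k^2$, is the intended one and is worth stating explicitly, since with the formula as literally printed the index datum $i(p)$ would not appear in the normal form at all.
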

We call $i(p)$ the {\it index} of $p$ there.
\begin{Def}
\label{def:2}
A fold map is said to be {\it special generic} if the index $i(p)$ is always $0$ for any singular point $p$.
\end{Def}
\subsection{Some classes of 7-dimensional closed and simply-connected manifolds.}
${\mathbb{R}}^k$ denotes the $k$-dimensional Euclidean space.
For a point $x \in {\mathbb{R}}^n$, $||x||$ denotes the distance between $x$ and the origin $0$ where the underlying metric is the standard Euclidean metric: it also denotes the value of the Euclidean norm at the vector $x \in {\mathbb{R}}^n$.

$S^k:=\{x \in {\mathbb{R}}^{k+1}\mid ||x||=1.\}$ denotes the $k$-dimensional unit sphere and $D^k:=\{x \in {\mathbb{R}}^{k}\mid ||x||=1.\}$ denotes the $k$-dimensional unit disk. ${\mathbb{C}P}^k$ denotes the $k$-dimensional complex projective space, which is a $k$-dimensional closed and simply-connected complex manifold.

A {\it homotopy} sphere means a smooth manifold homeomorphic to a unit disk. It is said to be {\it exotic} if it is not diffeomorphic to any unit sphere. If it is diffeomorphic to a $k$-dimensional unit sphere, then it is a $k$-dimensional {\it standard} sphere.

$7$-dimensional closed and simply-connected manifolds are important objects in the theory of classical algebraic topology and differential topology (of higher dimensional closed and simply-connected manifolds).
\begin{itemize}
\item There exist exactly $28$ types of $7$-dimensional oriented homotopy spheres. See \cite{milnor}, which is a pioneering work, and see also \cite{eellskuiper} for example.
\item (\cite{crowleyescher}, \cite{crowleynordstrom}, and so on.)
$7$-dimensional closed and $2$-connected manifolds are classified via concrete algebraic topological theory.
\item (\cite{kreck}.) The previous classification is extended to one for the class of $7$-dimensional closed and simply-connected manifolds whose 2nd integral homology groups are free.
\item(\cite{wang}.) There exists a one-to-one correspondence between the topologies of $7$-dimensional, closed, simply-connected and spin manifolds whose integral cohomology rings are isomorphic to that of ${\mathbb{C}P}^2 \times S^3$ and 2nd integral cohomology classes which are divisible by $4$. A closed and oriented manifold $X$ having such a topology is represented as a connected sum of $X$ and a $7$-dimensional oriented homotopy sphere $\Sigma$. Furthermore, between two of these manifolds, there exists an orientation-preserving diffeomorphism if and only if between the oriented homotopy spheres appearing in the definition, there exists an orientation-preserving diffeomorphism.  
\end{itemize}

\subsection{Fold maps on $7$-dimensional closed and simply-connected manifolds and a main theorem.}

The class of special generic maps contains Morse functions with exactly two singular points on homotopy spheres, playing important roles in so-called Reeb's theorem and canonical projections of unit spheres, for example. Related to this and more general fundamental theory of Morse functions and decompositions of the manifolds into {\it handles} associated with the functions, see \cite{milnor2} for example.

Proving that canonical projections of unit spheres are fold maps and special generic is, exercises on fundamental theory of differentiable manifolds and maps and Morse functions.

\begin{Thm}[\cite{calabi}, \cite{saeki}, \cite{saeki2}, \cite{wrazidlo} and so on.]
\label{thm:1}
$7$-dimensional {\it exotic} homotopy spheres admit no special generic maps into ${\mathbb{R}}^n$ for $n=4,5,6$. Oriented exotic homotopy spheres of 14 types admit no special generic maps for $n=3$.
\end{Thm}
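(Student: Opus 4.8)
The plan is to reduce the non-existence statements to the algebra of clutching constructions and then to obstruct the relevant clutchings. First I would invoke the structure theory of special generic maps developed in \cite{saeki}, \cite{saeki2}: given a special generic map $f \colon \Sigma^7 \to {\mathbb{R}}^n$, its Stein factorization produces a Reeb space $W_f$, a compact $n$-dimensional manifold with boundary through which $f$ factors as a codimension-$0$ immersion. Because $\Sigma^7$ is a homotopy sphere, and hence highly connected, $W_f$ is a contractible compact $n$-manifold with $\partial W_f$ a homotopy $(n-1)$-sphere; since $\Theta_{n-1}=0$ for $n \leq 7$, for the dimensions at hand I may take $W_f \cong D^n$. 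Over the interior the regular fibers are copies of $S^{7-n}$, and they collapse to points along the fold image $\partial W_f$, so I would reconstruct
$$\Sigma^7 \cong (D^n \times S^{7-n}) \cup_{\Phi} (S^{n-1} \times D^{8-n}),$$
where the local index-$0$ normal form of Definition \ref{def:1} forces the gluing $\Phi$ to cover the identity of $S^{n-1}=\partial D^n$. Thus $\Phi(x,y)=(x,\phi_x(y))$ for a map $\phi \colon S^{n-1} \to \mathrm{Diff}(S^{7-n})$, and the homotopy $7$-spheres admitting a special generic map into ${\mathbb{R}}^n$ are precisely the image of the clutching homomorphism $\pi_{n-1}(\mathrm{Diff}(S^{7-n})) \to \Theta_7$, the resulting sphere being denoted $\Sigma_\phi$.

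The next step is the key linearization observation: if $[\phi]$ lies in the image of $\pi_{n-1}(O(8-n)) \to \pi_{n-1}(\mathrm{Diff}(S^{7-n}))$, then each $\phi_x$ extends orthogonally over $D^{8-n}$, and changing coordinates on the handle $S^{n-1}\times D^{8-n}$ by $(x,v)\mapsto (x,\phi_x^{-1}(v))$ converts $\Phi$ into the identity gluing. Hence every linear clutching yields the standard $S^7=\partial(D^n\times D^{8-n})$, so the composite $\pi_{n-1}(O(8-n)) \to \Theta_7$ vanishes. For $n=4,5,6$ I would then use that $\mathrm{Diff}(S^{7-n})\simeq O(8-n)$: this is classical for $\mathrm{Diff}(S^1)\simeq O(2)$, is Smale's theorem for $\mathrm{Diff}(S^2)\simeq O(3)$, and is Hatcher's theorem (the Smale conjecture) for $\mathrm{Diff}(S^3)\simeq O(4)$. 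In each case every clutching is homotopic to a linear one, so $\Sigma_\phi\cong S^7$ always and only the standard sphere is special generic into ${\mathbb{R}}^n$; equivalently all $27$ exotic types are excluded for $n=4,5,6$.

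For $n=3$ the same scheme leaves the image of $\pi_2(\mathrm{Diff}(S^4)) \to \Theta_7={\mathbb{Z}}/28$ to be computed, and here linearization is unavailable because $\mathrm{Diff}(S^4)\simeq O(5)$ is not known. Instead I would detect the class by the Eells--Kuiper invariant $\mu \colon \Theta_7 \to {\mathbb{Q}}/{\mathbb{Z}}$ of \cite{eellskuiper}, which is injective with cyclic image of order $28$ and so distinguishes all $28$ oriented homotopy $7$-spheres of \cite{milnor}. Choosing a compact spin coboundary $E^8$ adapted to the $S^4$-fibration over $W_f=D^3$, I would evaluate $\mu(\Sigma_\phi)$ through the relative Pontryagin number $p_1^2[E]$ and the signature $\sigma(E)$; the bundle-theoretic constraints coming from the $S^4$-fibration and the parallelizability of $W_f$ force these characteristic numbers into a sublattice, so that $\mu(\Sigma_\phi)$ lands in the index-$2$ subgroup $2{\mathbb{Z}}/28\cong{\mathbb{Z}}/14$. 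As this subgroup is realized, exactly the $14$ oriented classes outside it admit no special generic map into ${\mathbb{R}}^3$, which is the asserted count.

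The hard part is precisely the $n=3$ computation. The linearization argument disposes of $n=4,5,6$ cleanly because the diffeomorphism groups of $S^1,S^2,S^3$ are understood, but for $n=3$ one is confronted with $\mathrm{Diff}(S^4)$, whose homotopy type is tied to the still-open smooth $4$-dimensional Schoenflies and Smale problems. Consequently the index-$2$ determination cannot proceed by reducing clutchings to the orthogonal group and must instead be extracted from the Eells--Kuiper bookkeeping, or equivalently by identifying the realized subgroup with the appropriate term of the Gromoll filtration as in \cite{wrazidlo}. Pinning down that it is exactly the even classes that are realized, and hence exactly the $14$ odd ones that are obstructed, is the delicate point where the deepest input is required.
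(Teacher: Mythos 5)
First, a point of comparison: the paper does not prove Theorem \ref{thm:1} at all; it is quoted as a known result with references to \cite{calabi}, \cite{saeki}, \cite{saeki2} and \cite{wrazidlo}, so your proposal can only be measured against that literature. For $n=4,5,6$ your strategy is indeed the standard one (essentially Saeki's argument in \cite{saeki}): Stein factorization, contractibility of the Reeb space $W_f$, triviality of the $S^{7-n}$-bundle over its interior part, linearity of the $D^{8-n}$-bundle over $\partial W_f$, and linearization of the gluing using $\mathrm{Diff}(S^{7-n})\simeq O(8-n)$ for $7-n\leq 3$ (classical, Smale, Hatcher). The genuine gap in your write-up is the step ``$W_f\cong D^n$.'' Contractibility does not give this: $\partial W_f$ is a priori only a homology $(n-1)$-sphere, not a homotopy sphere; and even granting that, the identification of $W_f$ with a disk needs the $h$-cobordism theorem, which is unavailable for $n=4$ (a compact contractible parallelizable $4$-manifold immerses in ${\mathbb{R}}^4$ and need not be a disk) and for $n=5$ runs into the open smooth $4$-dimensional Poincar\'e conjecture for $\partial W_f$. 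The standard repair is to avoid identifying $W_f$ altogether: after linearizing the clutching one shows $\Sigma^7\cong\partial E$ for a linear $D^{8-n}$-bundle $E$ over $W_f$, and since $W_f$ is contractible, $E\cong W_f\times D^{8-n}$ is a compact contractible $8$-manifold whose boundary $\Sigma^7$ is simply connected by hypothesis; the $h$-cobordism theorem in dimension $8$ then gives $E\cong D^8$ and $\Sigma^7\cong S^7$.

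Second, the $n=3$ half of the statement is not actually proved by your proposal. You correctly diagnose that linearization fails because $\mathrm{Diff}(S^4)$ is not understood and that the result must come from an Eells--Kuiper computation, but the two claims that constitute the theorem --- (i) that the characteristic-number constraints on a spin coboundary built from the special generic map force $\mu(\Sigma_\phi)$ into the index-$2$ subgroup of order $14$ of $\Theta_7\cong{\mathbb{Z}}/28{\mathbb{Z}}$, and (ii) that every class in that subgroup is realized --- are both asserted rather than established. Claim (i) requires constructing the coboundary and carrying out the $p_1^2$/signature bookkeeping of \cite{eellskuiper} (this is the content of \cite{saeki2} and of Saeki--Sakuma's later work), and claim (ii) is a nontrivial construction tied to the Gromoll filtration (this is where \cite{wrazidlo} enters). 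You flag this yourself as ``the delicate point where the deepest input is required,'' which is honest, but it means that the only quantitative assertion of the theorem --- that exactly $14$ oriented exotic types are obstructed --- is left unproved.
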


A bundle whose fiber is a (smooth) manifold is assumed to be {\it smooth} unless otherwise stated. A {\it smooth} bundle is a bundle whose structure group is a subgroup of the diffeomorphism group: the {\it diffeomorphism group} of a (smooth) manifold is the group of all diffeomorphisms (of course the diffeomorphisms are assumed to be smooth).

\begin{Thm}[\cite{kitazawa2}]
\label{thm:2}
Every $7$-dimensional homotopy sphere admits a fold map into ${\mathbb{R}}^4$ such that $f {\mid}_{S(f)}$ is an embedding and that $f(S(f))=\{x \mid ||x||=1,2,3.\}$. Furthermore, we have the following three.
\begin{enumerate}
\item We can obtain this map $f$ so that for any connected component $C \subset f(S(f))$, there exists a small closed tubular neighborhood $N(C)$ so that the composition of $f {\mid}_{f^{-1}(N(C))}:f^{-1}(N(C)) \rightarrow N(C)$ with the canonical projection to $C$ gives a trivial smooth bundle.
\item In the previous statement, we can replace $f(S(f))=\{x \mid ||x||=1,2,3.\}$ by $f(S(f))=\{x \mid  ||x||=1.\}$ if and only if the homotopy sphere is diffeomorphic to the unit sphere. 
\item As the previous statement, we can replace $f(S(f))=\{x \mid ||x||=1,2,3.\}$ by $f(S(f))=\{x \mid ||x||=1,2.\}$ if and only if the homotopy sphere is represented as the total space of a smooth bundle over $S^4$ whose fiber is diffeomorphic to $S^3$. There exist exactly $16$ types of $7$-dimensional oriented homotopy spheres satisfying this property by virtue of \cite{eellskuiper}. 
\end{enumerate}
\end{Thm}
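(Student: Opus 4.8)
The plan is to realize each map as a \emph{round} (concentric) fold map, built by decomposing the target $\mathbb{R}^4$ into the nested regions cut out by the spheres $\{||x||=1,2,3\}$ and decomposing $\Sigma$ into matching pieces, each a product $S^3\times D^4$ or a definite-fold cap, glued along their boundaries $S^3\times S^3$. The guiding principle is that the number of concentric spheres measures a complexity: the outermost sphere must be a definite (index $0$) fold, since the image is the closed ball it bounds and the preimage is empty outside; each additional interior sphere is a transition fold across which the regular fiber is modified by a surgery and whose attaching (clutching) data installs the nontrivial differentiable structure. I expect one sphere to force the standard sphere, two spheres to realize exactly the $S^3$-bundles over $S^4$, and three spheres to realize all $28$ oriented homotopy $7$-spheres.

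For the construction I would assemble the following blocks. Over the innermost ball $\{||x||<1\}$ take the trivial projection $S^3\times\mathring{D}^4\to\mathring{D}^4$, a submersion with regular fiber $S^3$ and no singular points. Over the outermost collar I would use the definite cap $(s,y)\mapsto(3-||y||^2)\cdot s$ on $S^3\times D^4$, whose only singular set is $S^3\times\{0\}$, mapped by an embedding onto $\{||x||=3\}$ as a definite (index $0$) fold in the sense of Definition~\ref{def:2}; one checks directly that this has the normal form of Definition~\ref{def:1}. The remaining annular regions carry \emph{transition} folds: on the overlapping spheres the two adjacent product pieces project to their respective $S^3$-factors, and gluing the boundaries $S^3\times S^3$ by a clutching diffeomorphism interchanging base and fibre directions produces, after a model computation, an indefinite fold embedded onto the sphere. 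Because each block is a product by construction and a tubular neighborhood $N(C)$ deformation retracts onto $C\cong S^3$, the composite $f^{-1}(N(C))\to N(C)\to C$ is a trivial bundle, which is exactly assertion~(1).

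The converse directions and the counting I would handle separately. For~(2), a single sphere forces the image to be the ball it bounds with empty exterior, so every singular point is a definite fold and $f$ is special generic (Definition~\ref{def:2}); by Theorem~\ref{thm:1} an exotic $7$-sphere admits no special generic map into $\mathbb{R}^4$, so $\Sigma$ must be the unit sphere, while the canonical projection of the unit sphere gives the other direction. For~(3), given a two-sphere map I would recover $\Sigma=f^{-1}(\{||x||\le 1\})\cup f^{-1}(\{1\le||x||\le 2\})$: the outer sphere is again definite, forcing the annular fiber to be $S^3$ and, since $\Sigma$ is a homotopy sphere, the inner fiber to be $S^3$ as well, so both pieces are copies of $S^3\times D^4$ glued along $S^3\times S^3$, i.e. $\Sigma$ is the total space of an $S^3$-bundle over $S^4$; the enumeration of exactly $16$ such oriented homotopy spheres is the classical Eells--Kuiper computation, \cite{eellskuiper}, \cite{milnor}. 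For the general three-sphere statement I would allow a second interior transition fold, so that a pair of clutching functions governs the differentiable structure, and verify that this pair realizes every class of $\Theta_7\cong\mathbb{Z}/28\mathbb{Z}$, the extra freedom accounting for the remaining $28-16$ types.

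The main obstacle is twofold. First, I must show that the transition maps across the interior spheres genuinely satisfy the fold normal form of Definition~\ref{def:1} and that $f|_{S(f)}$ is an embedding there, as in Proposition~\ref{prop:1}; this is a local computation at the base--fibre interchange, but producing a smooth global model compatible with the prescribed clutching, and identifying the index via the induced surgery on the $S^3$ fibre, is delicate. Second, and harder, is the rigidity needed for the `only if' part of~(3) and the surjectivity in the three-sphere case: one must show that \emph{any} such map, not merely the constructed ones, forces both regional pieces to be the standard $S^3\times D^4$, and that the admissible clutching data for two interior spheres cover the whole of $\Theta_7$ while those for one interior sphere cover precisely the $16$ bundle classes. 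Reconciling this count with the Eells--Kuiper classification is where I expect the real work to lie.
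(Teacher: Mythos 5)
The paper does not actually prove Theorem~\ref{thm:2}: it is imported verbatim from \cite{kitazawa2} (see also \cite{kitazawa3}), so there is no in-text proof to measure you against. Your overall strategy --- a round fold map with a definite fold over the outermost sphere, product blocks over the complementary regions, and clutching data at the interior folds carrying the differentiable structure, with part (2) reduced to the special generic rigidity of Theorem~\ref{thm:1} --- is the right one and matches the cited construction in spirit. Your argument for (2) is essentially complete once you record the standard lemma that a fold point mapping to the topological boundary of the image must have index $0$ (an indefinite fold is an open map near its singular point), so that a single singular sphere forces the map to be special generic.

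The genuine gaps are at the two places you yourself flag, and they are not merely technical. First, in the ``only if'' direction of (3), the step ``both pieces are copies of $S^3\times D^4$ glued along $S^3\times S^3$, i.e.\ $\Sigma$ is the total space of an $S^3$-bundle over $S^4$'' is a non sequitur: a manifold obtained by gluing two copies of $S^3\times D^4$ along $S^3\times S^3$ by an \emph{arbitrary} diffeomorphism is an $S^3$-bundle over $S^4$ only when the gluing map preserves the $D^4$-factor projection (i.e.\ is a clutching map $(x,y)\mapsto (x,\rho(x)y)$ up to isotopy), and a general diffeomorphism of $S^3\times S^3$ need not be isotopic to one of this form. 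You must extract the fiber-preserving property from the fold structure together with the triviality hypothesis of item (1) --- this is exactly what the hypothesis is there for --- and you have not done so. Second, the surjectivity onto all $28$ classes in the three-sphere case is asserted (``the extra freedom accounting for the remaining $28-16$ types'') with no construction: you would need to exhibit concrete gluing data over the two interior spheres, verify that the resulting closed $7$-manifold is a homotopy sphere, compute an invariant (e.g.\ the Eells--Kuiper invariant of \cite{eellskuiper}) distinguishing the outputs, and check that all residues mod $28$ are attained. Without that computation the main assertion of the theorem --- existence for \emph{every} homotopy $7$-sphere --- remains unproved; as written, your text establishes (modulo the fiber-preservation point above) only parts (1) and (2) and the ``if'' half of (3).
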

\begin{Thm}[\cite{kitazawa8}]
\label{thm:3}
Every $7$-dimensional manifold of \cite{wang}, presented in the previous subsection, admits a fold map into ${\mathbb{R}}^4$ such that $f {\mid}_{S(f)}$ is an embedding and that $f(S(f))=\{x \mid 1 \leq ||x|| \in \mathbb{N} \leq l\}$ for a suitable integer $3 \leq l \leq 5$. 
\end{Thm}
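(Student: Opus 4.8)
The plan is to build on the connected-sum description of \cite{wang}: every manifold $M$ in the class is an orientation-preserving connected sum $M = X_0 \# \Sigma$ of one fixed base manifold $X_0$ (which carries the cohomology ring of $\mathbb{C}P^2 \times S^3$ together with the divisibility-by-$4$ condition) with a $7$-dimensional oriented homotopy sphere $\Sigma$, the sphere $\Sigma$ being the only free parameter. Accordingly I would separate the problem into two tasks: (a) producing a fold map $f_0$ on the base $X_0$ whose singular value set is exactly the three concentric spheres $\{x \mid ||x|| = 1,2,3\}$, and (b) absorbing the summand $\Sigma$ by means of Theorem \ref{thm:2}, organised so that the singular value set stays a union of concentric spheres and $f \mid_{S(f)}$ stays an embedding.

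For task (a) I would prescribe $f_0 \colon X_0 \to \mathbb{R}^4$ through its Stein factorization (the Reeb space): the image disk $\{x \mid ||x|| \le 3\}$ is cut by the three spheres into an innermost disk and two annular shells, and the total space is reconstructed from the fold indices along the three spheres together with the smooth bundle data (clutching maps and framings) over the shells. I would take index-$0$ folds along the innermost sphere, so that the innermost piece $f_0^{-1}(\{x \mid ||x|| \le 1\})$ has a trivial bundle structure suitable for later gluing, and tune the folds along the outer two spheres and the shell bundle data so that the resulting total space realizes the generator of $H^2$, its prescribed square in $H^4$, and the $S^3$-factor. The spin condition and the divisibility by $4$ of the relevant $2$nd cohomology class would be encoded by selecting these framings with the appropriate $2$-adic divisibility. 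This gives the case $l = 3$.

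For task (b) I would first apply Theorem \ref{thm:2} to equip $\Sigma$ with a fold map into $\mathbb{R}^4$ whose singular value set is $\{x \mid ||x|| = 1,\dots,k\}$ with $k \in \{1,2,3\}$, the value $k$ being $1$, $2$ or $3$ according to whether $\Sigma$ is standard, of the $16$ bundle types, or arbitrary, and carrying the trivial-bundle structure near each singular component provided by part (1) of Theorem \ref{thm:2}. The connected sum $X_0 \# \Sigma$ would then be realized at the level of fold maps by grafting the map of $\Sigma$ into the innermost region of $f_0$ along the common trivial innermost piece, so that the three concentric spheres of $f_0$ acquire $k-1$ further concentric spheres from $\Sigma$ (one sphere being shared in the gluing collar); this yields a fold map on $M$ with singular value set $\{x \mid ||x|| = 1,\dots, l\}$ for $l = 3 + (k-1) = k+2 \in \{3,4,5\}$ and with $f \mid_{S(f)}$ an embedding. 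It remains to confirm that this grafting faithfully reproduces $X_0 \# \Sigma$ up to orientation-preserving diffeomorphism.

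The step I expect to be the main obstacle is task (a): arranging the fold indices and the shell bundle data so that the total space has \emph{exactly} the cohomology ring of $\mathbb{C}P^2 \times S^3$ with the correct multiplicative structure, is spin, and has its $2$nd cohomology class divisible by $4$, while simultaneously keeping $f_0 \mid_{S(f_0)}$ an embedding and using only three concentric spheres. The delicate points are to compute the ring structure, in particular the square of the degree-$2$ generator in $H^4$, directly from the Stein factorization, and to verify that the divisibility-by-$4$ hypothesis is precisely the condition under which the required $2$-divisible framings exist; a secondary difficulty is checking orientation-compatibility of the innermost grafting, so that the standard, bundle-type and general homotopy spheres fall into the cases $l = 3$, $l = 4$ and $l = 5$ respectively.
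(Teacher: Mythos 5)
Your two-step architecture --- first build a round fold map on a base manifold with exactly three concentric singular value spheres, then absorb the homotopy-sphere summand via Theorem \ref{thm:2} to reach $l\in\{3,4,5\}$ --- does match the strategy the paper attributes to \cite{kitazawa8} and partially reproduces in Proposition \ref{prop:2}. The problem is that your task (a), which you yourself flag as the main obstacle, is where essentially all of the content lies, and your proposal leaves it unresolved: ``tune the folds along the outer two spheres and the shell bundle data so that the resulting total space realizes the generator of $H^2$, its prescribed square in $H^4$, and the $S^3$-factor'' is a restatement of what has to be proved, not an argument. There is no a priori reason that Stein-factorization data over three shells can be chosen to produce exactly the ring of ${\mathbb{C}P}^2\times S^3$, the spin condition, and the divisibility properties. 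The paper avoids this abstract realization problem entirely: in the proof of Proposition \ref{prop:2} it starts from the explicitly known manifold ${M^{\prime}}_1\times S^1$ with ${M^{\prime}}_1$ diffeomorphic to ${\mathbb{C}P}^3$, viewed as a bundle over $S^4$ with fibre $S^2\times S^1$, removes the preimage of an embedded $4$-disk in the base $S^4$, and replaces the projection there by an explicit fold piece; the cup product structure and the Pontryagin class of the result are then read off from the known topology of ${\mathbb{C}P}^3$ (via the submanifold ${\mathbb{C}P}^2$ representing the relevant $4$-th homology class) rather than ``tuned''. If you want a proof and not a programme, you need such an explicit starting manifold.

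A second, smaller error: Wang's class does not have ``one fixed base manifold $X_0$.'' The topological types are in one-to-one correspondence with $2$nd integral cohomology classes divisible by $4$, realized in the paper by the condition $p_1=4k$ for arbitrary $k\in\mathbb{Z}$, so the construction must produce a $\mathbb{Z}$-family of base manifolds $M_{1,k}$; in the paper this integer enters through the choice made in the disk-replacement step. Your part (b), grafting the Theorem \ref{thm:2} map on $\Sigma$ into the innermost region so that $l=k+2$, is consistent with the intended argument, but you would still need to check that the gluing is performed along the $S^3$ component of the innermost fibre $S^2\times S^1\sqcup S^3$ while keeping the singular value set concentric and the restriction to the singular set an embedding.
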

Hereafter, $\mathbb{N}$ denotes the set of all positive integers. 
\begin{MainThm}
Let $l_0>0$ be an integer. Let $\{l_j\}_{j=1}^{l_0}$ and $\{k_j\}_{j=1}^{l_0}$ be sequences of integers of length $l_0$. Let ${l_0}^{\prime} \leq l_0$ be a positive integer. 
Let ${\Pi}_{l_0,{l_0}^{\prime}}$ be a surjection from the set $\{p \in \mathbb{N} \mid 1 \leq p \leq l_0.\}$ onto the set $\{p \in \mathbb{N} \mid 1 \leq p \leq {l_0}^{\prime}.\}$. 
Then there exist a $7$-dimensional closed, oriented, simply-connected and spin manifold $M$ and a {\rm stable} fold map $f:M \rightarrow {\mathbb{R}}^4$ such that the following four properties hold.
\begin{enumerate}
\item $H_2(M;\mathbb{Z}) \cong {\mathbb{Z}}^{l_0}$ and $H_3(M;\mathbb{Z}) \cong {\mathbb{Z}}^{{l_0}^{\prime}}$.
\item For suitable bases $\{a_j\}_{j=1}^{l_0} \subset H^2(M;\mathbb{Z}) \cong {\mathbb{Z}}^{l_0}$ and $\{b_j\}_{j=1}^{{l_0}^{\prime}} \subset H^4(M;\mathbb{Z}) \cong {\mathbb{Z}}^{{l_0}^{\prime}}$, the following two properties hold.
\begin{enumerate}
\item The cup product of $a_{j_1}$ and $a_{j_2}$ always vanishes for distinct $j_1$ and $j_2$.
\item The square of $a_j$ is $l_j b_{{\Pi}_{l_0,{l_0}^{\prime}}(j)}$.
\end{enumerate}
\item The 1st Pontryagin class of $M$ is ${\Sigma}_{j=1}^{l_0} k_j b_{{\Pi}_{l_0,{l_0}^{\prime}}(j)}$.
\item Connected components of the singular set of $f$ are diffeomorphic to the $3$-dimensional unit sphere. The restriction to each connected component of the singular set of $f$ is an embedding and the preimage of a singular value contains at most $2$ singular points. In addition, connected components of preimages of regular values are always diffeomorphic to $S^3$ or $S^2 \times S^1$. 
\end{enumerate}
\end{MainThm}
Different from results on explicit fold maps into ${\mathbb{R}}^4$ on $7$-dimensional, closed, simply-connected and spin manifolds in \cite{kitazawa6}, \cite{kitazawa7} and \cite{kitazawa9}, squares of 2nd integral cohomology classes in Theorem \ref{thm:3} and Main Theorem may not be divisible by $2$. 
The definition of a {\it stable} fold map and a proof of Main Theorem are presented in the next section. 

\section{A proof of main theorem.}
\label{sec:2}
For a smooth manifold $X$, $T_pX$ denotes the tangent vector space at $p \in X$. For a smooth map $c:X \rightarrow Y$, ${dc}_p:T_pX \rightarrow T_{c(p)} Y$ denotes the differential at $p$.
\begin{Def}
\label{def:3}
A fold map $c:X \rightarrow Y$ on a closed manifold is said to be {\it stable} if for any $y \in c(S(c))$, the following two conditions are satisfied.
\begin{enumerate}
\item $c^{-1}(y)$ is a discrete set $\{p_j\}_{j=1}^l$ consisting of exactly $l>0$ points.
\item $\dim Y=\dim {\bigcap}_{j=1}^l {dc}_{p_j}(T_{p_j}X)+{\Sigma}_{j=1}^l (\dim Y-{\rm rank}\ {dc}_{p_j})$.
\end{enumerate} 
\end{Def}
Note that $\dim Y-{\rm rank}\ {dc}_{p_j}=1$ there.
For example, a fold map $f$ such that $f {\mid}_{S(f)}$ is an embedding is stable. 
For this notion, for example, see also \cite{golubitskyguillemin}, which mainly explains fundamental theory and classical important theory on singularities of differentiable maps.
\begin{Def}[\cite{kitazawa}--\cite{kitazawa5}.]
\label{def:4}
Let $m \geq n \geq 2$ be integers.
A stable fold map $f:M \rightarrow {\mathbb{R}}^n$ on an $m$-dimensional closed manifold $M$ into ${\mathbb{R}}^n$ is said to be {\it round} if $f {\mid}_{S(f)}$ is an embedding and for a suitable integer $l>0$ and a suitable diffeomorphism $\phi$ on ${\mathbb{R}}^n$, $(\phi \circ f)(S(f))=\{x \mid 1 \leq ||x|| \in \mathbb{N} \leq l.\}$.
\end{Def}

This class contains canonical projections of unit spheres into the Euclidean space whose dimension is greater than $1$, stable fold maps in Theorems \ref{thm:2} and \ref{thm:3}, and so on. We can define this notion for $n=1$ and this class contains Morse functions with exactly two singular points on homotopy spheres (\cite{kitazawa5}).

A {\it linea}r bundle is a smooth bundle whose fiber is diffeomorphic to a unit sphere or a unit disk and whose structure group acts linearly on the fiber. For linear bundles, characteristic classes such as {\it Stiefel-Whitney classes}, {\it Euler classes} (for {\it oriented} linear bundles), {\it Pontryagin classes}, and so on, are defined as cohomology classes of the base spaces. 
We can define these notions for smooth manifolds as those of the tangent bundles.
We do not review oriented linear bundles, these characteristic classes, or so on, precisely. See \cite{milnorstasheff} for example. See also \cite{little} for Pontryagin classes of complex projective spaces and manifolds which are homotopy equivalent to them.

For a closed manifold, a homology class is said to be {\it represented by a closed {\rm (}and oriented{\rm )} submanifold with no boundary} if it is realized as the value of the homomorphism induced by the canonical inclusion map of the submanifold at the {\it fundamental class}. The fundamental class of a closed (oriented) manifold is a generator of the top homology group of the manifold (compatible with the orientation) for a coefficient commutative group: if the group is isomorphic to $\mathbb{Z}/2\mathbb{Z}$, then we do not need to orient the manifold.

For a compact manifold $X$, let $c$ be an element of the $i$-th homology group $H_i(X;\mathbb{Z})$ such that we cannot represent as $c=rc^{\prime}$ for any $(r,c^{\prime}) \in (\mathbb{Z}-\{0,1,-1\}) \times H_i(X;\mathbb{Z})$ and that $rc \neq 0$ for any $r \in \mathbb{Z}-\{0\}$. We can define $c^{\ast} \in H^{i}(X;\mathbb{Z})$ such that $c^{\ast}(c)=1$ and that $c^{\ast}(A)=0$ for any subgroup $A \subset H_i(X;\mathbb{Z})$ giving a representation of $H_i(X;\mathbb{Z})$ as the internal direct sum of the subgroup generated by $c$ and $A$. We can define this in a unique way and call this the {\it dual} of $c$.

The following proposition extends (some of) Theorem \ref{thm:3}.

\begin{Prop}
\label{prop:2}
Let $l$ be an integer. There exists a family $\{f_{l,k}:M_{l,k} \rightarrow {\mathbb{R}}^4\}_{k \in\mathbb{Z}}$ of round fold maps on $7$-dimensional oriented, closed, simply-connected and spin manifolds satisfying the following four properties.
\begin{enumerate}
\item
\label{prop:2.1}
$H_j(M_{l,k};\mathbb{Z}) \cong \mathbb{Z}$ for $j=0, 2, 3, 4, 5, 7$.
\item
\label{prop:2.2}
For a suitable generator of $H^2(M_{l,k};\mathbb{Z}) \cong \mathbb{Z}$, the square is $l$ times a suitable generator of $H^4(M_{l,k};\mathbb{Z})$.
\item
\label{prop:2.3}
The 1st Pontryagin class of $M_{l,k}$ is $4k$ times the generator of $H^4(M_{l,k};\mathbb{Z})$ defined just before.
\item
\label{prop:2.4}
The singular value set of each round fold map consists of exactly three connected components and $\{x \in {\mathbb{R}}^4 \mid ||x||=1,2,3.\}$. Furthermore, the preimages of a point in $\{x \in {\mathbb{R}}^4 \mid ||x||<1.\}$, one in $\{x \in {\mathbb{R}}^4 \mid 1<||x||<2.\}$, and one in $\{x \in {\mathbb{R}}^4 \mid 2<||x||<3.\}$, are diffeomorphic to $S^2 \times S^1 \sqcup S^3$, $S^2 \times S^1$ and $S^3$, respectively.
\end{enumerate} 
\end{Prop}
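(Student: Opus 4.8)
The plan is to construct $f_{l,k}$ directly as a round fold map in the sense of Definition~\ref{def:4}, assembling $M_{l,k}$ from the preimages of the three concentric regions cut out by the prescribed singular value set $\{x \in {\mathbb{R}}^4 \mid ||x||=1,2,3.\}$, and then to read off all invariants from the assembly data. This follows the circle of ideas behind Theorems~\ref{thm:2} and \ref{thm:3}: over each closed annulus $\{j \le ||x|| \le j+1\}$ (diffeomorphic to $S^3 \times [j,j+1]$) the map is a smooth bundle with a fixed fibre, and across each singular sphere the fibre changes according to the local normal form of Definition~\ref{def:1}. Since the fibre dimension is $7-4=3$, the admissible indices are $0,1,2$, and the prescribed fibres in (4) dictate the indices uniquely: across $\{||x||=3\}$ the fibre passes between $\varnothing$ and $S^3$, an index-$0$ (\emph{special generic}) fold; across $\{||x||=2\}$ it passes between $S^3$ and $S^2 \times S^1$ by a surgery along a framed $S^0$, an index-$1$ fold; and across $\{||x||=1\}$ the two inner components $S^3$ and $S^2 \times S^1$ are joined by a $1$-handle, again an index-$1$ fold, producing $S^3 \# (S^2 \times S^1) = S^2 \times S^1$ on the outer side. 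In particular each component of $S(f_{l,k})$ is a $3$-sphere embedded by $f_{l,k}\mid_{S(f_{l,k})}$, there are exactly three of them, and $M_{l,k}$ is connected because the index-$1$ fold over $\{||x||=1\}$ fuses the two inner components; this gives (4).

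The parameters $l$ and $k$ are inserted through the gluing data. The outermost, index-$0$ piece over $\{||x|| \le 3\}$ is governed by a \emph{linear} $D^4$-bundle over the $4$-disk $\{||x|| \le 3\}$ (its $S^3$-fibres being the regular fibres over the interior); doubling the base disk along $\{||x||=3\}$ extends this to a linear $D^4$-bundle over $S^4$, whose first Pontryagin number, together with the requirement that $M_{l,k}$ be spin, forces $p_1(M_{l,k})$ into $4\mathbb{Z}\cdot b$ and lets us realize every multiple $4k$; the relevant Pontryagin and homotopy-sphere computations are those of \cite{milnor} and \cite{eellskuiper}. The relation $a^2 = l\,b$ is encoded in the middle region: the generator $a \in H^2(M_{l,k};\mathbb{Z})$ is the dual of the $S^2$-factor of the fibre $S^2 \times S^1$, and $a^2 \in H^4(M_{l,k};\mathbb{Z})$ is computed as the obstruction to extending a chosen representative of this $S^2$ across the index-$1$ folds; this obstruction equals $l$ times the generator $b$ dual to the $S^3$-class in $H_3$, where $l$ is the framing degree (equivalently a linking number in a regular fibre) chosen for the surgeries. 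Allowing this degree to be an \emph{arbitrary} integer, in particular odd and different from $\pm 1$, is exactly the new ingredient relative to the manifolds underlying Theorem~\ref{thm:3}, whose cohomology rings are forced to satisfy $a^2 = \pm b$.

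With the pieces fixed, properties (1)--(3) follow by essentially formal computation. That $M_{l,k}$ is closed, oriented, simply-connected and spin is arranged by the construction: orientability and the spin condition are built into the choice of linear bundles, and simple-connectivity holds because the nontrivial $\pi_1$ of each $S^2 \times S^1$ fibre is killed by the adjacent index-$1$ folds. Property (1) is then obtained from a Mayer--Vietoris argument over the three regions (or from the induced handle decomposition), the fibres $S^3$ and $S^2 \times S^1$ supplying the generators of $H_2,H_3,H_4,H_5$. Property (2) is the cup-product computation sketched above, which also yields the vanishing of products between distinct generators. Property (3) identifies $p_1(M_{l,k})$ with $4k\,b$ by expressing the tangent bundle on each piece as the sum of the vertical (fibrewise) tangent bundle and the pullback of the tangent bundle of the base, and evaluating the resulting characteristic class.

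The main obstacle is to realize $a^2 = l\,b$ and $p_1(M_{l,k}) = 4k\,b$ \emph{simultaneously} while keeping the assembled map an honest stable (indeed round) fold map with exactly the prescribed fibres. Two points require care: first, that the index-$0$ and index-$1$ normal forms along $\{||x||=1,2,3\}$ are mutually compatible, so that the three bundle pieces glue to a globally well-defined stable fold map in the sense of Definition~\ref{def:3}; and second, the precise cup-product computation showing that the chosen framing degree produces $a^2 = l\,b$ with $l$ permitted to be odd. This last step is the heart of the matter and the genuine improvement over the constructions behind Theorem~\ref{thm:3}. Once it is in place, the remaining verifications are routine.
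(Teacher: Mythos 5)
Your overall strategy --- assembling the round fold map directly from bundle pieces over the concentric annuli --- is not the paper's route, and as written it contains a fatal error in the choice of fold indices. You use only index-$0$ and index-$1$ folds; an index-$1$ fold in this dimension modifies the $3$-dimensional fibre by a surgery on a framed $S^0$ (attaching a $1$-handle) or, crossed the other way, on a framed $S^2$. Neither operation attaches a $2$-cell along the circle factor of $S^2 \times S^1$, so in the trace of each of your fibre transitions the generator of $\pi_1(S^2\times S^1)\cong\mathbb{Z}$ survives; since the bundle pieces over the annuli also have $\pi_1$ isomorphic to $\pi_1$ of the fibre (the annuli retract to the $2$-connected $S^3$), van Kampen shows that this circle represents a nontrivial class in $\pi_1(M_{l,k})$, indeed in $H_1(M_{l,k};\mathbb{Z})$. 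Your assertion that ``the nontrivial $\pi_1$ of each $S^2\times S^1$ fibre is killed by the adjacent index-$1$ folds'' is exactly backwards: killing that circle requires a fibrewise $2$-handle, i.e.\ an \emph{index-$2$} fold realizing the transition between $S^2\times S^1$ and $S^3$ as surgery on $\{\ast\}\times S^1$. That is what the paper does: the cobordism sitting over the middle fold sphere is $S^2\times D^2$ with the interior of a $4$-disk removed, a single $2$-handle attachment, which is simply connected. With your indices the manifold is not simply connected and properties (1)--(3) collapse.

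The second, independent gap is that you never actually prove property (2), the realization of $a^2=l\,b$ for \emph{arbitrary} (in particular odd) $l$; you name it ``the heart of the matter'' and defer it, attributing $l$ to an unspecified ``framing degree.'' Since this is precisely the content that distinguishes the proposition from earlier constructions, deferring it leaves the proof incomplete. The paper obtains it essentially for free by a different decomposition: it starts from the linear $S^2$-bundle ${M^{\prime}}_l$ over $S^4$ whose total space has $a^2=l\,b$ and $p_1=4l\,b$ (known from the classification of such bundles and of simply connected $6$-manifolds; $S^2\times S^4$ for $l=0$, ${\mathbb{C}P}^3$ for $l=1$), forms ${M^{\prime}}_l\times S^1$ as an $S^2\times S^1$-bundle over $S^4$, and replaces the part lying over a $4$-disk in the base by a piece with $S^3$ fibres across an index-$2$ fold; this single operation kills $\pi_1$, transfers the cup-product relation to the $7$-manifold, and, through the freedom in the gluing, realizes every value $p_1=4k\,b$ independently of $l$. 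The resulting map is then rearranged into a round fold map with the three prescribed fibres. If you wish to keep a direct-assembly argument you must (i) replace your middle index-$1$ fold by an index-$2$ fold and (ii) supply the cup-product computation, which in practice amounts to re-importing the $6$-manifold ${M^{\prime}}_l$ or an equivalent explicit model.
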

\begin{proof}
We can prove this in a similar manner to that in the proof of Theorem \ref{thm:2} in \cite{kitazawa8}. 
We prove this including this original case.  

There exists a linear bundle ${M^{\prime}}_l$ over $S^4$ whose fiber is diffeomorphic to $S^2$ satisfying the following two properties.
\begin{enumerate}
\item
\label{prop:2.0-1.1}
 The square of a generator of the 2nd integral cohomology group of this total space is $l$ times a generator of the $4$-th integral cohomology group.
\item
\label{prop:2.0-1.2}
The 1st Pontryagin class of the total space is $4l$ times the generator before. 
\end{enumerate}
For this manifold, see articles and webpages on $6$-dimensional closed and simply-connected manifolds such as \cite{jupp}, \cite{wall}, \cite{zhubr} and \cite{zhubr2} for example.
If $l=0$, then the total space is diffeomorphic to $S^2 \times S^4$ and if $l=1$, then it is diffeomorphic to the $3$-dimensional complex projective space ${\mathbb{C}P}^3$. We have a smooth bundle ${M^{\prime}}_l \times S^1$ over $S^4$ whose fiber is diffeomorphic to $S^2 \times S^1$ by the composition of the projection of a trivial bundle over ${M^{\prime}}_l$ with the projection of the linear bundle before. We remove the interior of a smoothly embedded copy of a $4$-dimensional unit disk and the preimage for the resulting projection. We can exchange this to a stable fold map such that the restriction to the singular set is an embedding, that the singular set is diffeomorphic to $S^3$, and that the preimages of regular values are $S^3$ and $S^2 \times S^1$, respectively, for the two connected components of the regular value set. Furthermore, we can do this so that the following three hold. Rigorous understandings are left to readers and similar expositions are in \cite{kitazawa8} for $l=1$. 
\begin{enumerate}
\item
\label{prop:2.0-2.1}
If an arbitrary integer $k$ is given, then the domain is a $7$-dimensional oriented, closed, simply-connected and spin manifold $M_{l,k}$ and this satisfies the following properties.
\begin{enumerate}
\item
\label{prop:2.0-2.1.1}
$H_j(M_{l,k};\mathbb{Z}) \cong \mathbb{Z}$ for $j=0, 2, 3, 4, 5, 7$.
\item
\label{prop:2.0-2.1.2}
We take a 2nd integral homology class $c$ represented by $S^2 \times \{\ast\} \subset S^2 \times S^1$ in the preimage of a regular value before. For its dual, which is a cohomology class in $H^2(M_{l,k};\mathbb{Z}) \cong \mathbb{Z}$, the square is $l$ times a generator of $H^4(M_{l,k};\mathbb{Z})$, which is the dual of an integral homology class represented by a suitable closed submanifold with no boundary and also the Poincar\'e dual to a homology class represented by the preimages of regular values, which are diffeomorphic to $S^3$ or $S^2 \times S^1$.
\item
\label{prop:2.0-2.1.3}
The 1st Pontryagin class of $M_{l,k}$ is $4k$ times the generator of $H^4(M_{l,k};\mathbb{Z})$ defined just before.
\end{enumerate}
\item
\label{prop:2.0-2.2}
For the singular value set $C$, there exists a small closed tubular neighborhood $N(C)$ and the composition of the restriction to the preimage of $N(C)$ with a canonical projection to $C$ gives a trivial smooth bundle whose fiber is diffeomorphic to a manifold obtained by removing the interior of a copy of the $4$-dimensional unit disc smoothly embedded in a manifold diffeomorphic to $S^2 \times {\rm Int}\ D^2 \subset S^2 \times D^2$.
\item
\label{prop:2.0-2.3}
The index of each singular point is $2$.   
\end{enumerate}
If $l=0$, then the suitable closed submanifold in the first property here can be taken as one diffeomorphic to the $4$-dimensional unit sphere.
If $l=1$, then the suitable closed submanifold in the first property here can be taken as one diffeomorphic to the complex projective plane ${\mathbb{C}P}^2$.

We construct a desired round fold map from this surjection. We can construct a trivial bundle over the subset $\{x \in {\mathbb{R}}^4 \mid ||x|| \leq \frac{1}{2}.\} \subset {\mathbb{R}}^4$
whose fiber is diffeomorphic to $S^3 \sqcup (S^2 \times S^1)$ and whose total space is the preimage of the complementary set of ${\rm Int}\ N(C) \subset S^4$. 
On the preimage of $N(C)$ for the surjection, we have the product map of a suitable Morse function on a $4$-dimensional compact manifold diffeomorphic to the fiber of the trivial bundle over $C$ before and the identity map on $C$ and glue this and the previous projection in a suitable way on the boundaries. Thus we have a desired round fold map into ${\mathbb{R}}^4$. If $l=1$, then we have a (partial) proof of Theorem \ref{thm:3}.
\end{proof} 
Via fundamental arguments on deformations of Morse functions and stable fold maps, we immediately have the following proposition.
\begin{Prop}
\label{prop:3}
Let $l$ be an integer. There exists a family $\{F_{l,k}:M_{l,k} \times [0,1] \rightarrow {\mathbb{R}}^4\}_{k \in\mathbb{N}}$ of smooth homotopies such that for any $t \in [0,1]$ the maps $F_{l,k,t}$ mapping $x$ to $F_{l,k}(x,t)$ are fold maps on the $7$-dimensional oriented, closed, simply-connected and spin manifolds satisfying the following four properties.
\begin{enumerate}
\item
\label{prop:3.1}
$F_{l,k,0}=f_{l,k}$.
\item
\label{prop:3.2}
The singular set $S(F_{l,k,t})$ consists of exactly three copies of the $3$-dimensional unit sphere and on each connected component it is an embedding. Furthermore, $F_{l,k,t}(S(F_{l,k,t}))=\{x \in {\mathbb{R}}^4 \mid ||x||=2,3.\} \bigcup \{x:=(x_1,x_2,x_3,x_4) \in {\mathbb{R}}^4 \mid ||x-(\frac{3}{2}t,0,0,0)||=1.\}$.
\item
\label{prop:3.3}
For each map $F_{l,k,t}$, the preimages of a point in $\{x \in {\mathbb{R}}^4 \mid ||x||<2, ||x-(\frac{3}{2}t,0,0,0)||>1.\}$, one in $\{x \in {\mathbb{R}}^4 \mid 2<||x||<3,||x-(\frac{3}{2}t,0,0,0)||>1.\}$, one in $\{x \in {\mathbb{R}}^4 \mid ||x||<2, ||x-(\frac{3}{2}t,0,0,0)||<1.\}$, and one in $\{x \in {\mathbb{R}}^4 \mid 2<||x||<3, ||x-(\frac{3}{2}t,0,0,0)||<1.\}$, are diffeomorphic to $S^2 \times S^1$, $S^3$, $(S^2 \times S^1) \sqcup S^3$ and $S^3 \sqcup S^3$, respectively.  
\item
\label{prop:3.4}
Furthermore, in the previous situation, for the preimages diffeomorphic to $(S^2 \times S^1) \sqcup S^3$ and $S^3 \sqcup S^3$, the latter $S^3$'s for these two manifolds are isotopic to $S^3$ in $(S^2 \times S^1) \sqcup S^3$ of a preimage for the map $F_{l,k,0}=f_{l,k}$ where suitable identifications of preimages are considered.
\end{enumerate} 
\end{Prop}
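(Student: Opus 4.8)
The plan is to realize the homotopy $F_{l,k}$ as an explicit deformation of the round fold map $f_{l,k}$ of Proposition \ref{prop:2} which slides its innermost singular value sphere $\{x \mid ||x||=1\}$ outward along the $x_1$-axis, keeping the two outer spheres $\{x \mid ||x||=2\}$ and $\{x \mid ||x||=3\}$ fixed, uniformly in $l$ and $k$. First I would record the local structure of $f_{l,k}$ near the innermost sphere. By the trivial bundle property established in the proof of Proposition \ref{prop:2}, over a small tubular neighborhood of this sphere the map is a product of an embedding $S^3 \hookrightarrow {\mathbb{R}}^4$ with a fixed one-dimensional fold, and crossing the sphere inward creates exactly one new fiber component diffeomorphic to $S^3$ while leaving the $S^2 \times S^1$ component untouched. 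Thus the innermost sphere carries a standard ``birth of an $S^3$ component'' feature, confined to a collar and, up to the triviality above, independent of the rest of the map.

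For the first part of the homotopy, as long as the moving sphere $\{x \mid ||x-(\frac{3}{2}t,0,0,0)||=1\}$ stays strictly inside $\{x \mid ||x||=2\}$, that is for $t<\frac{2}{3}$, I would obtain $F_{l,k,t}$ simply by post-composing $f_{l,k}$ with a compactly supported isotopy $\Phi_t$ of ${\mathbb{R}}^4$, supported in $\{x \mid ||x||<2\}$, that translates the closed unit ball centered at the origin onto the one centered at $(\frac{3}{2}t,0,0,0)$ and is the identity near and outside $\{x \mid ||x||=2\}$. Post-composition by a diffeomorphism of the target sends fold maps to fold maps and merely relabels fibers, with $\Phi_0=\mathrm{id}$ giving property (1); so properties (2) and (3) are immediate in this range: the region $\{x \mid ||x||<2,\ ||x-(\frac{3}{2}t,0,0,0)||<1\}$ inherits the fiber $(S^2 \times S^1) \sqcup S^3$, its complement inside $\{x \mid ||x||<2\}$ inherits $S^2 \times S^1$, the shell $\{x \mid 2<||x||<3\}$ keeps $S^3$, and the fourth region is still empty.

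The essential step is to continue the homotopy through and past $t=\frac{2}{3}$, where the moving sphere is internally tangent to (since the center lies at distance $1=|2-1|$ from the origin), and then transversally crosses, the sphere $\{x \mid ||x||=2\}$. Here one can no longer use an ambient isotopy of the target fixing $\{x \mid ||x||=2\}$, and the deformation genuinely pushes one singular value sheet through the other. I would use the fact that the two folds are of independent type: the moving sheet is the birth of an $S^3$ component described above, while the sheet $\{x \mid ||x||=2\}$ performs an $S^1$-surgery converting $S^3$ to $S^2 \times S^1$ on a \emph{different} fiber component. Hence near a transverse double point of the singular value set the map has a product local model in two independent target directions, the tangency at $t=\frac{2}{3}$ is a codimension-one event crossed by a generic one-parameter family, and the map stays a fold map throughout (stable for $t\neq\frac{2}{3}$). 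Since the two transitions act on disjoint fiber components, the global preimage in the newly created chamber $\{x \mid 2<||x||<3,\ ||x-(\frac{3}{2}t,0,0,0)||<1\}$ is the superposition of the $S^3$ already present in the shell with the newly born $S^3$, namely $S^3 \sqcup S^3$, and likewise $\{x \mid ||x||<2,\ ||x-(\frac{3}{2}t,0,0,0)||<1\}$ acquires $(S^2 \times S^1) \sqcup S^3$; this gives property (3) for all $t$, and property (2) holds throughout because the three spheres stay embedded, only their images crossing.

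Finally, property (4) follows by tracking the born $S^3$ component through the whole deformation. It is the component created across the moving sheet, carried rigidly by the collar normal form and by the sliding, and it never meets the $S^1$-surgery performed across $\{x \mid ||x||=2\}$, which lives in the other component. Hence, under the identifications of preimages supplied by the local triviality, the latter $S^3$ summand of $(S^2 \times S^1) \sqcup S^3$ and the latter $S^3$ summand of $S^3 \sqcup S^3$ are both isotopic to the $S^3$ summand of $(S^2 \times S^1) \sqcup S^3$ appearing for $f_{l,k}=F_{l,k,0}$, as claimed. I expect the \emph{main obstacle} to be making the crossing at $t=\frac{2}{3}$ rigorous, namely exhibiting the product local model at the tangency and the ensuing transverse intersection and verifying that the global preimages are exactly those asserted, rather than the two ``product'' ranges, whose verification is routine.
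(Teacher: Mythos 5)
Your overall strategy --- slide the innermost singular value sphere along the $x_1$-axis by an ambient isotopy of the target while it stays inside $\{\|x\|=2\}$, then push it transversally through that sphere --- is exactly what the paper intends: the paper offers no argument at all beyond the phrase ``via fundamental arguments on deformations of Morse functions and stable fold maps'' and FIGURE \ref{fig:1}, which depicts precisely the singular value sets of your $F_{l,k,0}$ and $F_{l,k,1}$. So your write-up is strictly more detailed than the paper's, and you correctly isolate the crossing at $t=\tfrac{2}{3}$ as the only nontrivial point.

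There is, however, a genuine flaw in the premise you use to justify that crossing. You model the innermost sheet as a definite-fold ``birth of an $S^3$ component'' that leaves the $S^2\times S^1$ component untouched, and you then argue that the two folds can be pushed through each other because ``the two transitions act on disjoint fiber components.'' The first claim cannot be literally true: if the transition $S^2\times S^1\rightsquigarrow (S^2\times S^1)\sqcup S^3$ across $\{\|x\|=1\}$ were a definite fold creating a component from nothing, then the union of the new $S^3$ fibers over the open inner ball together with the fold sphere would be a compact subset of $M_{l,k}$ that is also open (a definite fold point has a neighborhood consisting entirely of points of the newborn component), hence a closed connected component of $M_{l,k}$ --- impossible, since $M_{l,k}$ is connected. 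The innermost sheet is necessarily an indefinite fold (index $1$): the $S^3$ is pinched off along a $2$-sphere bounding a small $3$-ball inside the existing fiber. Consequently, at a double point of the singular value set the two transitions act on the \emph{same} fiber component (the ambient $S^3$ of the shell $\{2<\|x\|<3\}$), not on disjoint ones, and your stated reason for the existence of the product local model evaporates. The correct and standard repair is to observe that the two surgeries have disjoint \emph{supports} in that fiber --- a small $3$-ball for the index-$1$ pinch versus a solid-torus neighborhood of the vanishing circle for the index-$2$ surgery --- which can be arranged by the trivialized collar structure; disjointness of supports is what yields the independent product model at the transverse double points, the admissibility of the tangency at $t=\tfrac{2}{3}$, and the fiber count $S^3\sqcup S^3$ in the new chamber. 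With that substitution your argument for properties (2)--(4) goes through; without it, the key step rests on a description of the fold that is topologically inconsistent.
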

For this, see also FIGURE \ref{fig:1}.
\begin{figure}
\includegraphics[width=40mm]{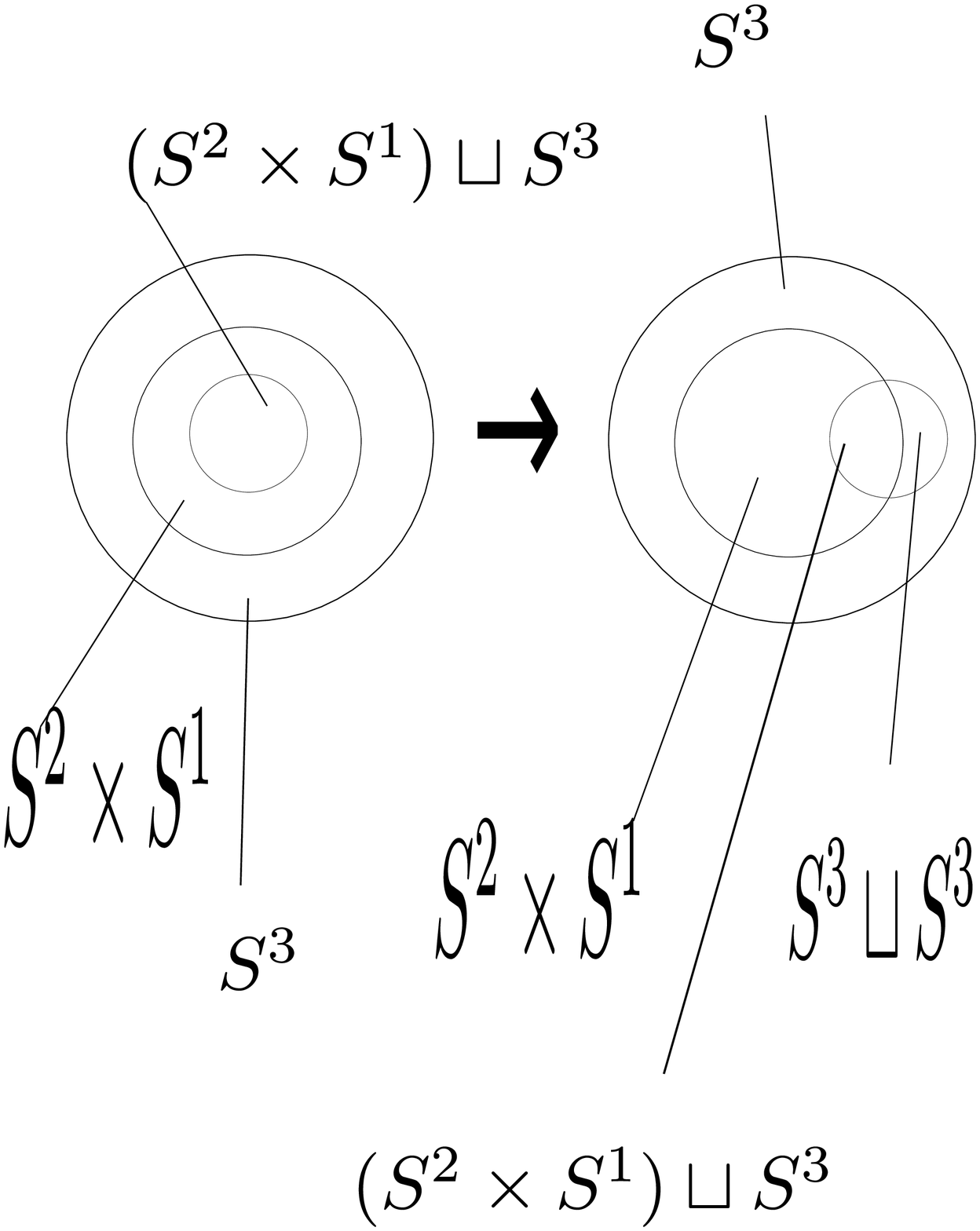}
\caption{The singular value sets of $F_{l,k,0}=f_{l,k}$ and $F_{l,k,1}$ in Proposition \ref{prop:3}: descriptions of the manifolds represent preimages.}
\label{fig:1}
\end{figure}
For a compact manifold, a closed submanifold is said to be {\it proper} if the boundary of the closed submanifold is in the boundary of the compact manifold and the interior of the submanifold is in the interior of the manifold.
We prove the main theorem.
\begin{proof}[A proof of Main Theorem]
We can easily see that $F_{l,k,1}$ is a stable fold map. $P$ denotes the subset $\{x:=(x_1,x_2,x_3,x_4) \in {\mathbb{R}}^4 \mid x_1 \geq 2.25.\}$ of ${\mathbb{R}}^4$.
We take two copies of the restriction of this stable fold map $F_{l,k,1} {\mid}_{{F_{l,k,1}}^{-1}({\mathbb{R}}^4-{\rm Int}\ P)}$. This is a smooth map on the manifold obtained by removing the tubular neighborhood of a submanifold diffeomorphic to $S^3$ in $M_{l,k}$. More precisely, this submanifold can be taken as the preimage of a regular value of a round fold map in Proposition \ref{prop:2} by (the property (\ref{prop:3.4}) in) Proposition \ref{prop:3}. By gluing these copies, we have a stable fold map on a new $7$-dimensional oriented, closed, simply-connected and spin manifold $M$. We consider the following Mayer-Vietoris sequence
$$\begin{CD}
@> >> H_j(\partial {F_{l,k,1}}^{-1}({\mathbb{R}}^4-{\rm Int}\ P);\mathbb{Z}) \\
@> >> H_j( {F_{l,k,1}}^{-1}({\mathbb{R}}^4-{\rm Int}\ P);\mathbb{Z}) \oplus H_j({F_{l,k,1}}^{-1}({\mathbb{R}}^4-{\rm Int}\ P);\mathbb{Z}) \\
@> >> H_j(M;\mathbb{Z}) \\
@> >> 
\end{CD}$$
and we can see that $H_j(\partial {F_{l,k,1}}^{-1}({\mathbb{R}}^4-{\rm Int}\ P);\mathbb{Z})$ is zero for $j \neq 0,3,6$, isomorphic to $\mathbb{Z}$ for $j=0,6$, and isomorphic to $\mathbb{Z} \oplus \mathbb{Z}$ for $j=3$. Furthermore, $(\partial {F_{l,k,1}}^{-1}({\mathbb{R}}^4-{\rm Int}\ P)$ is the manifold of the domain of a round fold map whose singular set consists of two connected components and for each point in each connected component of the regular value set of this round fold map, the preimage is empty, diffeomorphic to $S^3$ and $S^3 \times S^3$, respectively. The $6$-dimensional manifold is simply-connected and diffeomorphic to $S^3 \times S^3$. For these arguments, consult Theorem 4 and Example 6 of \cite{kitazawa3} and as closely related papers \cite{kitazawa} and \cite{kitazawa2}. We can easily see that ${F_{l,k,1}}^{-1}({\mathbb{R}}^4-{\rm Int}\ P)$ is diffeomorphic to $S^3 \times D^4$ by the structures of the maps.

The kernel of the homomorphism from $H_3(\partial {F_{l,k,1}}^{-1}({\mathbb{R}}^4-{\rm Int}\ P);\mathbb{Z})$ into $H_3( {F_{l,k,1}}^{-1}({\mathbb{R}}^4-{\rm Int}\ P);\mathbb{Z}) \oplus H_3({F_{l,k,1}}^{-1}({\mathbb{R}}^4-{\rm Int}\ P);\mathbb{Z})$ is isomorphic to $\mathbb{Z}$. We see this. We can take a basis of $H_3(\partial {F_{l,k,1}}^{-1}({\mathbb{R}}^4-{\rm Int}\ P);\mathbb{Z})$ so that the two elements satisfy the following by the structures of the maps and the manifolds.
\begin{enumerate}
\item One of the two elements is represented by the preimage of a regular value of the original round fold map in Proposition \ref{prop:2}.
\item The remaining element is represented by the boundary of a $4$-dimensional proper compact submanifold in ${F_{l,k,1}}^{-1}({\mathbb{R}}^4-{\rm Int}\ P)$. Furthermore, the $4$-dimensional compact submanifold can be taken as a manifold obtained by removing the interior of a copy of the $4$-dimensional unit disc smoothly embedded in the $4$-dimensional closed submanifold with no boundary in (\ref{prop:2.0-2.1.2}) in the proof of Proposition \ref{prop:2}. In addition, we can glue $4$-dimensional closed submanifolds on the boundaries and we have a new $4$-dimensional closed submanifold $S$ with no boundary in the resulting manifold $M$.
\end{enumerate}
On the submodule generated by the first element, the homomorphism is a monomorphism. On the submodule generated by the second element, it is zero. This completes the proof on this fact on the  kernel. We have $H_j( {F_{l,k,1}}^{-1}({\mathbb{R}}^4-{\rm Int}\ P);\mathbb{Z}) \oplus H_j({F_{l,k,1}}^{-1}({\mathbb{R}}^4-{\rm Int}\ P);\mathbb{Z}) \cong H_j(M;\mathbb{Z})$ for $j=2,5$ easily. Furthermore, by the argument on the kernel and the structure of the homomorphism, the group $H_3(M;\mathbb{Z})$ is free and its rank is\\
${\rm rank} \quad (H_3( {F_{l,k,1}}^{-1}({\mathbb{R}}^4-{\rm Int}\ P);\mathbb{Z}) \oplus H_3({F_{l,k,1}}^{-1}({\mathbb{R}}^4-{\rm Int}\ P);\mathbb{Z}))-1$.
We can know the integral homology group and the integral cohomology ring of $M$ by virtue of Poincar\'e duality theorem and the topological structures of the manifolds and the maps. Furthermore, for example, $H_4(M;\mathbb{Z})$ is generated by a homology class represented by the submanifold $S$.

By virtue of the arguments (together with some additional properties on topological structures), we have a desired stable fold map on a desired manifold in the case $(l_0,{l_0}^{\prime})=(2,1)$ with $(l_1,l_2,k_1,k_2)=(l,l,k,k)$. For example, we can take $a_1$ and $a_2$ as the duals of natural homology classes and $b_1$ as the dual of a class represented by $S$ before.

By considering a connected sum of the original manifolds instead, we have a desired result in the case $(l_0,{l_0}^{\prime})=(2,2)$ with $(l_1,l_2,k_1,k_2)=(l,l,k,k)$. In this case we define $P^{\prime}:=\{x:=(x_1,x_2,x_3,x_4) \in {\mathbb{R}}^4 \mid x_1 \geq 2.75.\}$ of ${\mathbb{R}}^4$ instead of $P$ to obtain a desired map and a manifold. We can also consider original round fold maps in Proposition \ref{prop:2} or \ref{prop:3} and take $P^{\prime}$ instead as before to obtain a desired map and a manifold. We have a desired stable fold map.

We can easily see that we can generalize the proofs to general cases. In fact it is sufficient to consider suitable iterations of the presented fundamental operations (in suitably and naturally generalized ways) for given stable maps into ${\mathbb{R}}^4$. 

This completes the proof. 
\end{proof}
\section{Acknowledgement.}
\label{sec:3}
\thanks{This work was partially supported by "The Sasakawa Scientific Research Grant" (2020-2002 : https://www.jss.or.jp/ikusei/sasakawa/). The author is a member of JSPS KAKENHI Grant Number JP17H06128 "Innovative research of geometric topology and singularities of differentiable mappings" (Principal Investigator: Osamu Saeki). This work is also supported by the project. We declare that data supporting our present work is all in the present paper.}

\end{document}